\def\R		{\mathbb{R}}
\def\d		{\:\mathrm{d}}
\renewcommand{\vec}[1]{\mathbf{#1}}
\newcommand{\clifford}[1]{\ensuremath{C\kern-0.1em\ell_{#1}}}% #1 stands for the values p,q.
\def\e		{\vec{e}}
\def\A		{\vec{A}}
\def\B		{\vec{B}}
\def\x		{\vec{x}}
\def\y		{\vec{y}}
\def\u		{\vec{u}}
\def\v		{\vec{v}}
 \newtheorem{thm}{Theorem}[section]
 \newtheorem{cor}[thm]{Corollary}
 \newtheorem{lem}[thm]{Lemma}
 \newtheorem{defn}[thm]{Definition}
 \newtheorem{ex}{Example}
\begin{document}

\title*{Detection of Outer Rotations on 3D-Vector Fields with Iterative Geometric Correlation}
\titlerunning{Detection of Outer Rotations with Geometric Correlation}
% Use \titlerunning{Short Title} for an abbreviated version of
% your contribution title if the original one is too long
\author{Roxana Bujack, Gerik Scheuermann, and Eckhard Hitzer}
\authorrunning{Bujack, Scheuermann, Hitzer}
% Use \authorrunning{Short Title} for an abbreviated version of
% your contribution title if the original one is too long
\institute{
Roxana Bujack \at Universit\"at Leipzig, Institut f\"ur Informatik, Johannisgasse 26, 04103 Leipzig, Germany\\ \email{bujack@informatik.uni-leipzig.de}
\and 
Gerik Scheuermann \at Universit\"at Leipzig, Institut f\"ur Informatik, Johannisgasse 26, 04103 Leipzig, Germany\\ \email{scheuermann@informatik.uni-leipzig.de}
\and
Eckhard Hitzer \at University of Fukui, Department of Applied Physics, 3-9-1 Bunkyo, Fukui 910-8507, Japan\\ \email{hitzer@mech.u-fukui.ac.jp}}
% Use the package "url.sty" to avoid
% problems with special characters
% used in your e-mail or web address
%
\maketitle

\abstract*{Correlation is a common technique for the detection of shifts. Its generalization to the multidimensional geometric correlation in Clifford algebras has proven a useful tool for color image processing, because it additionally contains information about rotational misalignment.
% But so far the exact correction of a three-dimensional outer rotation could only be achieved in certain special cases. 
\newline\indent
In this paper we prove that applying the geometric correlation iteratively can detect the outer rotational misalignment for arbitrary three-dimensional vector fields. Thus, it develops a foundation applicable for image registration and pattern matching. Based on the theoretical work we have developed a new algorithm and tested it on some principle examples.}

\abstract{Correlation is a common technique for the detection of shifts. Its generalization to the multidimensional geometric correlation in Clifford algebras has proven a useful tool for color image processing, because it additionally contains information about rotational misalignment.
% But so far the exact correction of a three-dimensional outer rotation could only be achieved in certain special cases. 
\newline\indent
In this paper we prove that applying the geometric correlation iteratively can detect the outer rotational misalignment for arbitrary three-dimensional vector fields. Thus, it develops a foundation applicable for image registration and pattern matching. Based on the theoretical work we have developed a new algorithm and tested it on some principle examples.}

\section{Introduction}
%---------------------------------------------------------------------------------------------------------------------------------
In signal processing correlation is a basic technique to determine the similarity or dissimilarity of two signals. It is widely used for image registration, pattern matching, and feature extraction \cite{BRO92,ZIT03}. The idea of using correlation for registration of shifted signals is that at the very position, where the signals match, the correlation function will 
take its maximum, compare \cite{RK82}.
\par 
For a long time the generalization of this method to multidimensional signals has only been an amount of single channel processes. The elements of %geometric or 
Clifford algebras $\clifford{p,q}$, compare \cite{C1878,HS84}, have a natural geometric interpretation, so the analysis of multivariate signals expressed as multivector valued functions is a very reasonable approach.
\par
Scheuermann \cite{Sch99} used Clifford algebras for vector field analysis. Together with Ebling \cite{Ebl03,Ebl06} they developed a pattern matching algorithm based on geometric convolution and correlation and accelerated it by means of a Clifford Fourier transform and its convolution theorem.
\par
At about the same time Sangwine et. al. \cite{MSE01} introduced a generalized hypercomplex correlation for quaternions. Together with Ell and Moxey \cite{MSE02,MSE03} they used it to represent color images, interpreted as vector fields, geometrically. They discovered that this geometric correlation not only contains the translational difference of images given by the position of the magnitude peak, but also information about a possible rotational misalignment of two signals and showed how to apply them to approximately correct color space distortions. 
\par
Even though lately other approaches to work with color images were made \cite{GSH10,Schl11,MSM11} we want to extend the work and ideas of Moxey, Ell and Sangwine using hypercomplex correlation. We analyze vector fields $\v(\x):\R^m\to\R^3\subset\clifford{3,0}$ with values interpreted as elements of the geometric algebra $\clifford{3,0}$ and their copies produced from outer rotations. A great advantage of the geometric algebra is that many statements generally hold not just for vectors but for all multivectors. We will make use of that and state the more general formulae, whenever possible. 
\par
\begin{figure}[ht]
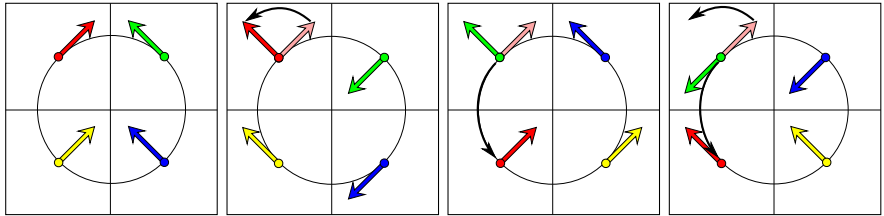

\begin{minipage}{0.23\textwidth}
\centering
\psset{unit=0.8pt}
  %LaTeX with PSTricks extensions
%%Creator: inkscape 0.47
%%Please note this file requires PSTricks extensions
%\psset{xunit=.5pt,yunit=.5pt,runit=.5pt}
\begin{pspicture}(102,102)
{
\newrgbcolor{curcolor}{1 1 1}
\pscustom[linestyle=none,fillstyle=solid,fillcolor=curcolor]
{
\newpath
\moveto(1,101)
\lineto(101,101)
\lineto(101,1)
\lineto(1,1)
\lineto(1,101)
\closepath
}
}
{
\newrgbcolor{curcolor}{0 0 0}
\pscustom[linewidth=0.5,linecolor=curcolor]
{
\newpath
\moveto(1,101)
\lineto(101,101)
\lineto(101,1)
\lineto(1,1)
\lineto(1,101)
\closepath
}
}
{
\newrgbcolor{curcolor}{1 1 1}
\pscustom[linestyle=none,fillstyle=solid,fillcolor=curcolor]
{
\newpath
\moveto(86.0166576,50.76017173)
\curveto(86.0166576,31.4302046)(70.3466235,15.76017013)(51.01665683,15.76017013)
\curveto(31.68669015,15.76017013)(16.01665605,31.4302046)(16.01665605,50.76017173)
\curveto(16.01665605,70.09013886)(31.68669015,85.76017333)(51.01665683,85.76017333)
\curveto(70.3466235,85.76017333)(86.0166576,70.09013886)(86.0166576,50.76017173)
\closepath
}
}
{
\newrgbcolor{curcolor}{0 0 0}
\pscustom[linewidth=0.09999999,linecolor=curcolor]
{
\newpath
\moveto(86.0166576,50.76017173)
\curveto(86.0166576,31.4302046)(70.3466235,15.76017013)(51.01665683,15.76017013)
\curveto(31.68669015,15.76017013)(16.01665605,31.4302046)(16.01665605,50.76017173)
\curveto(16.01665605,70.09013886)(31.68669015,85.76017333)(51.01665683,85.76017333)
\curveto(70.3466235,85.76017333)(86.0166576,70.09013886)(86.0166576,50.76017173)
\closepath
}
}
{
\newrgbcolor{curcolor}{1 0 0}
\pscustom[linestyle=none,fillstyle=solid,fillcolor=curcolor]
{
\newpath
\moveto(24.50853795,75.67407342)
\lineto(38.65067355,89.81620902)
\lineto(35.11513965,90.5233158)
\lineto(42.89331423,92.64463614)
\lineto(40.77199389,84.86646156)
\lineto(40.06488711,88.40199546)
\lineto(25.92275151,74.25985986)
\lineto(24.50853795,75.67407342)
\closepath
}
}
{
\newrgbcolor{curcolor}{0 0 0}
\pscustom[linewidth=0.1,linecolor=curcolor]
{
\newpath
\moveto(24.50853795,75.67407342)
\lineto(38.65067355,89.81620902)
\lineto(35.11513965,90.5233158)
\lineto(42.89331423,92.64463614)
\lineto(40.77199389,84.86646156)
\lineto(40.06488711,88.40199546)
\lineto(25.92275151,74.25985986)
\lineto(24.50853795,75.67407342)
\closepath
}
}
{
\newrgbcolor{curcolor}{1 0 0}
\pscustom[linestyle=none,fillstyle=solid,fillcolor=curcolor]
{
\newpath
\moveto(27.33696695,74.25986894)
\curveto(26.55591481,73.4788168)(25.28958199,73.47881391)(24.50853343,74.25986247)
\curveto(23.72748486,75.04091103)(23.72748776,76.30724385)(24.5085399,77.08829599)
\curveto(25.28959203,77.86934813)(26.55592485,77.86935102)(27.33697342,77.08830246)
\curveto(28.11802198,76.3072539)(28.11801908,75.04092108)(27.33696695,74.25986894)
\closepath
}
}
{
\newrgbcolor{curcolor}{0 0 0}
\pscustom[linewidth=0.1,linecolor=curcolor]
{
\newpath
\moveto(27.33696695,74.25986894)
\curveto(26.55591481,73.4788168)(25.28958199,73.47881391)(24.50853343,74.25986247)
\curveto(23.72748486,75.04091103)(23.72748776,76.30724385)(24.5085399,77.08829599)
\curveto(25.28959203,77.86934813)(26.55592485,77.86935102)(27.33697342,77.08830246)
\curveto(28.11802198,76.3072539)(28.11801908,75.04092108)(27.33696695,74.25986894)
\closepath
}
}
{
\newrgbcolor{curcolor}{0 0 0}
\pscustom[linewidth=0.1,linecolor=curcolor]
{
\newpath
\moveto(1,50.5)
\lineto(101,50.5)
}
}
{
\newrgbcolor{curcolor}{0 0 0}
\pscustom[linewidth=0.2,linecolor=curcolor]
{
\newpath
\moveto(50.5,101)
\lineto(50.5,1)
}
}
{
\newrgbcolor{curcolor}{0 1 0}
\pscustom[linestyle=none,fillstyle=solid,fillcolor=curcolor]
{
\newpath
\moveto(76.0166592,74.34596057)
\lineto(61.8745236,88.48809617)
\lineto(61.16741682,84.95256227)
\lineto(59.04609648,92.73073685)
\lineto(66.82427106,90.60941651)
\lineto(63.28873716,89.90230973)
\lineto(77.43087276,75.76017413)
\lineto(76.0166592,74.34596057)
\closepath
}
}
{
\newrgbcolor{curcolor}{0 0 0}
\pscustom[linewidth=0.1,linecolor=curcolor]
{
\newpath
\moveto(76.0166592,74.34596057)
\lineto(61.8745236,88.48809617)
\lineto(61.16741682,84.95256227)
\lineto(59.04609648,92.73073685)
\lineto(66.82427106,90.60941651)
\lineto(63.28873716,89.90230973)
\lineto(77.43087276,75.76017413)
\lineto(76.0166592,74.34596057)
\closepath
}
}
{
\newrgbcolor{curcolor}{0 1 0}
\pscustom[linestyle=none,fillstyle=solid,fillcolor=curcolor]
{
\newpath
\moveto(77.43086368,77.17438956)
\curveto(78.21191581,76.39333743)(78.21191871,75.12700461)(77.43087015,74.34595604)
\curveto(76.64982158,73.56490748)(75.38348876,73.56491038)(74.60243663,74.34596251)
\curveto(73.82138449,75.12701465)(73.8213816,76.39334747)(74.60243016,77.17439603)
\curveto(75.38347872,77.9554446)(76.64981154,77.9554417)(77.43086368,77.17438956)
\closepath
}
}
{
\newrgbcolor{curcolor}{0 0 0}
\pscustom[linewidth=0.1,linecolor=curcolor]
{
\newpath
\moveto(77.43086368,77.17438956)
\curveto(78.21191581,76.39333743)(78.21191871,75.12700461)(77.43087015,74.34595604)
\curveto(76.64982158,73.56490748)(75.38348876,73.56491038)(74.60243663,74.34596251)
\curveto(73.82138449,75.12701465)(73.8213816,76.39334747)(74.60243016,77.17439603)
\curveto(75.38347872,77.9554446)(76.64981154,77.9554417)(77.43086368,77.17438956)
\closepath
}
}
{
\newrgbcolor{curcolor}{1 1 0}
\pscustom[linestyle=none,fillstyle=solid,fillcolor=curcolor]
{
\newpath
\moveto(24.60244795,25.63517342)
\lineto(38.74458355,39.77730902)
\lineto(35.20904965,40.4844158)
\lineto(42.98722423,42.60573614)
\lineto(40.86590389,34.82756156)
\lineto(40.15879711,38.36309546)
\lineto(26.01666151,24.22095986)
\lineto(24.60244795,25.63517342)
\closepath
}
}
{
\newrgbcolor{curcolor}{0 0 0}
\pscustom[linewidth=0.1,linecolor=curcolor]
{
\newpath
\moveto(24.60244795,25.63517342)
\lineto(38.74458355,39.77730902)
\lineto(35.20904965,40.4844158)
\lineto(42.98722423,42.60573614)
\lineto(40.86590389,34.82756156)
\lineto(40.15879711,38.36309546)
\lineto(26.01666151,24.22095986)
\lineto(24.60244795,25.63517342)
\closepath
}
}
{
\newrgbcolor{curcolor}{1 1 0}
\pscustom[linestyle=none,fillstyle=solid,fillcolor=curcolor]
{
\newpath
\moveto(27.43087695,24.22096894)
\curveto(26.64982481,23.4399168)(25.38349199,23.43991391)(24.60244343,24.22096247)
\curveto(23.82139486,25.00201103)(23.82139776,26.26834385)(24.6024499,27.04939599)
\curveto(25.38350203,27.83044813)(26.64983485,27.83045102)(27.43088342,27.04940246)
\curveto(28.21193198,26.2683539)(28.21192908,25.00202108)(27.43087695,24.22096894)
\closepath
}
}
{
\newrgbcolor{curcolor}{0 0 0}
\pscustom[linewidth=0.1,linecolor=curcolor]
{
\newpath
\moveto(27.43087695,24.22096894)
\curveto(26.64982481,23.4399168)(25.38349199,23.43991391)(24.60244343,24.22096247)
\curveto(23.82139486,25.00201103)(23.82139776,26.26834385)(24.6024499,27.04939599)
\curveto(25.38350203,27.83044813)(26.64983485,27.83045102)(27.43088342,27.04940246)
\curveto(28.21193198,26.2683539)(28.21192908,25.00202108)(27.43087695,24.22096894)
\closepath
}
}
{
\newrgbcolor{curcolor}{0 0 1}
\pscustom[linestyle=none,fillstyle=solid,fillcolor=curcolor]
{
\newpath
\moveto(76.0166592,24.34596057)
\lineto(61.8745236,38.48809617)
\lineto(61.16741682,34.95256227)
\lineto(59.04609648,42.73073685)
\lineto(66.82427106,40.60941651)
\lineto(63.28873716,39.90230973)
\lineto(77.43087276,25.76017413)
\lineto(76.0166592,24.34596057)
\closepath
}
}
{
\newrgbcolor{curcolor}{0 0 0}
\pscustom[linewidth=0.1,linecolor=curcolor]
{
\newpath
\moveto(76.0166592,24.34596057)
\lineto(61.8745236,38.48809617)
\lineto(61.16741682,34.95256227)
\lineto(59.04609648,42.73073685)
\lineto(66.82427106,40.60941651)
\lineto(63.28873716,39.90230973)
\lineto(77.43087276,25.76017413)
\lineto(76.0166592,24.34596057)
\closepath
}
}
{
\newrgbcolor{curcolor}{0 0 1}
\pscustom[linestyle=none,fillstyle=solid,fillcolor=curcolor]
{
\newpath
\moveto(77.43086368,27.17438956)
\curveto(78.21191581,26.39333743)(78.21191871,25.12700461)(77.43087015,24.34595604)
\curveto(76.64982158,23.56490748)(75.38348876,23.56491038)(74.60243663,24.34596251)
\curveto(73.82138449,25.12701465)(73.8213816,26.39334747)(74.60243016,27.17439603)
\curveto(75.38347872,27.9554446)(76.64981154,27.9554417)(77.43086368,27.17438956)
\closepath
}
}
{
\newrgbcolor{curcolor}{0 0 0}
\pscustom[linewidth=0.1,linecolor=curcolor]
{
\newpath
\moveto(77.43086368,27.17438956)
\curveto(78.21191581,26.39333743)(78.21191871,25.12700461)(77.43087015,24.34595604)
\curveto(76.64982158,23.56490748)(75.38348876,23.56491038)(74.60243663,24.34596251)
\curveto(73.82138449,25.12701465)(73.8213816,26.39334747)(74.60243016,27.17439603)
\curveto(75.38347872,27.9554446)(76.64981154,27.9554417)(77.43086368,27.17438956)
\closepath
}
}
\end{pspicture}
\end{minipage}
\hspace{0.1cm}
\begin{minipage}{0.23\textwidth}
\centering
\psset{unit=0.8pt}
  \input{vf_outer}
\end{minipage}
\hspace{0.1cm}
\begin{minipage}{0.23\textwidth}
\centering
\psset{unit=0.8pt}
  \input{vf_inner}
\end{minipage}
\hspace{0.1cm}
\begin{minipage}{0.23\textwidth}
\centering
\psset{unit=0.8pt}
  \input{vf_total}
\end{minipage}
\caption{From left to right: a vector field, its copy from outer rotation, inner rotation, total rotation.}\label{f:1}
\end{figure}
The term rotational misalignment with respect to multivector fields is ambiguous. In general three types of rotations are distinguished, compare Figure \ref{f:1}. Let $\operatorname{R} _{\vec P,\alpha}$ be an operator, that describes a mathematically positive rotation by the angle $\alpha\in[0,\pi]$\footnote{As in \cite{MSE02} we encode the sign in the bivector $\vec P$ and deal with positive angles only.} in the plane $P$, spanned by the unit bivector $\vec P$. We say two multivector fields $\A(\x),\B(\x):\R^m\to\clifford{3,0}$ differ by an \textbf{outer rotation} if they suffice
\begin{equation}
\begin{aligned}\label{outer}
 \A(\x)=\operatorname{R} _{\vec P,\alpha}(\B(\x)).
\end{aligned}
\end{equation}
Independent from their position $\x$ the multivector $\A(\x)$ is the rotated copy of the multivector $\B(\x)$. This kind of rotation appears for example in color images, when the color vector space is turned, but the picture is not moved, compare \cite{MSE03}. 
In contrast to that for $m\leq3$ an \textbf{inner rotation} is described by 
\begin{equation}
\begin{aligned}\A(\x)=\B(\operatorname{R} _{\vec P,-\alpha}(\x)).
\end{aligned}
\end{equation}
Here the starting position of every vector is rotated by $-\alpha$ then the old vector is reattached at the new position. It still points into the old direction. The inner rotation is suitable to describe the rotation of a color image. The color does not change when the picture is turned. In the case of bijective fields $\A(\x),\B(\x):\R^3\to\R^3\subset\clifford{3,0}$ a \textbf{total rotation} is a combination of the previous ones defined by 
\begin{equation}
\begin{aligned}\A(\x)=\operatorname{R} _{\vec P,\alpha}(\B(\operatorname{R} _{\vec P,-\alpha}(\x))).
\end{aligned}
\end{equation} 
It can be interpreted as coordinate transform, that means as looking at the multivector field from another point of view. The positions and the multivectors are stiffly connected during the rotation.
\par
With respect to the definition of the correlation there are different formulae in current literature, \cite{Ebl06,MSE03}. We prefer the following one because it satisfies a geometric generalization of the Wiener-Khinchin theorem and because it coincides with the definition of the standard cross-correlation for complex functions in the special case of $\clifford{0,1}$, \cite{RK82}. For vector fields they mostly coincide anyways because of $\overline{\v(\x)}=\v(\x)$, where the overbar denotes reversion.
\begin{defn}
 The \textbf{geometric cross correlation} of two multivector valued functions $\A(\x),\B(\x):\R^m\to\clifford{p,q}$ is a multivector valued function defined by
\begin{equation}
\begin{aligned}
(\A\star \B)(\x):=&\int_{\R^m}\overline{\A(\y)}\B(\y+\x)\d^m\y,
\end{aligned}
\end{equation} 
where $\overline{\A(\y)}$ denotes the reversion $\sum\limits_{k=0}^n(-1)^{\frac12k(k-1)}\langle \A(\y)\rangle_k$.
\end{defn}
To simplify the notation we will only analyze the correlation at the origin. If the vector fields also differ by an inner shift this can for example be detected by evaluating the magnitude of the correlation \cite{Hes86} or phase correlation of the field magnitudes \cite{KH75}. 
%, or from the shift theorem of a suitible geometric Fourier transform \cite{BSH11}. 
Our methods can then be applied analogously to that translated position.
%----------------------------------------------------------------------------------------------------------------------------
\section{Motivation}
%----------------------------------------------------------------------------------------------------------------------------
In two dimensions a mathematically positive\footnote{anticlockwise} outer rotation of a vector field $\R^2\to\R^2\subset\clifford{2,0}$ by the angle $\alpha$ takes the shape
\begin{equation}
\begin{aligned}
\operatorname{R} _{\e_{12},\alpha}(\v(\x))=e^{-\alpha \e_{12}}\v(\x).
\end{aligned} 
\end{equation}
So the product of the vector field and its copy at any position $\x\in\R^m$ yields
\begin{equation}
\begin{aligned}\label{outerprod}
\operatorname{R} _{\e_{12},\alpha}(\v(\x))\v(\x)=e^{-\alpha \e_{12}}\v(\x)\v(\x)=\v(\x)^2e^{-\alpha \e_{12}},
\end{aligned} 
\end{equation}
with $\v(\x)^2=\v(\x)\overline{\v(\x)}=||\v(\x)||_2^2\in\R$ and the rotation can fully be restored by rotating back with the inverse of (\ref{outerprod}) or explicitly calculating $\alpha$ as described in \cite{Hes86}. This property is inherited by the geometric correlation at the origin
\begin{equation}
\begin{aligned}
(\operatorname{R} _{\e_{12},\alpha}(\v)\star \v)(0)&=\int_{\R^m}\overline{\operatorname{R} _{\e_{12},\alpha}(\v(\x))}\v(\x)\d^m\x
=||\v(\x)||_{L^2}^2e^{-\alpha \e_{12}},
\end{aligned} 
\end{equation}
which is to be preferred because of its robustness.
\par
%-----------------------------------------------------------------------------------------------------------------------------------
%\section{Three-dimensional geometric product}
%-----------------------------------------------------------------------------------------------------------------------------------
In three dimensions not only the angle but also the plane of rotation $P$ has to be detected in order to reconstruct the whole transform. We want to analyze if the geometric correlation at the origin contains enough information here, too. First we look at two vectors $\u,\v$ that suffice $\u=\operatorname{R} _{\vec P,\alpha}(\v)$. Their geometric product 
\begin{equation}
\begin{aligned}\label{vectors}
\u\v=&\u\cdot \v+\u\wedge \v
=|\u||\v|\big(\cos(\angle(\u,\v))+\sin(\angle(\u,\v))\frac{\u\wedge \v}{|\u\wedge \v|}\big)
\end{aligned}
\end{equation}
contains an angle and a plane and therefore seems very motivating. But the rotation $\operatorname{R} _{\vec P,\alpha}$ we used is not necessarily the one which is described by $\operatorname{R}_{\frac{\u\wedge \v}{|\u\wedge \v|},\angle(\u,\v)}$. These two rotations only coincide if the vectors lie completely within the plane $P$. The reason for that is as follows. A vector and its rotated copy do not contain enough information to reconstruct the rotation that produced the copy. Figure \ref{f:2} shows some of the infinitely many different rotations that can result in the same copy. Regard the set of all circles $C$, that contain the end points of $\u$ and $\v$ and are located on the sphere $S_{|\u|}(0)$ with radius $r=|\u|=|\v|$ centered at the origin. Every plane that includes a circle in $C$ is a possible plane for the rotation from $\v$ to $\u$.
The information we get out of the geometric product belongs to the plane that fully contains both vectors. This rotation is the one that has the smallest angle of all possible ones and forms the largest circle on a great circle of the sphere, shown on the very left in Figure \ref{f:2}.
\par
\begin{figure}[ht]
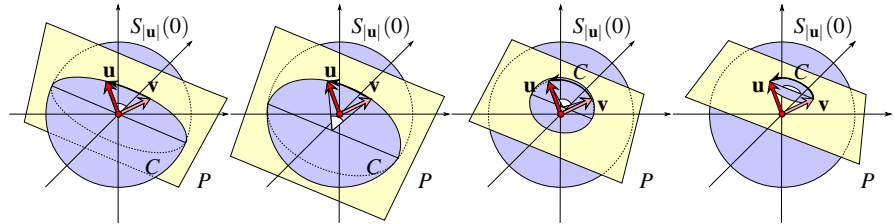

\begin{minipage}{0.23\textwidth}
\centering
\psset{unit=0.55pt}
  \input{Rdrehung1}
\end{minipage}
\hspace{0.1cm}
\begin{minipage}{0.23\textwidth}
\centering
\psset{unit=0.55pt}
  \input{Rdrehung2}
\end{minipage}
\hspace{0.1cm}
\begin{minipage}{0.23\textwidth}
\centering
\psset{unit=0.55pt}
  \input{Rdrehung3}
\end{minipage}
\hspace{0.1cm}
\begin{minipage}{0.23\textwidth}
\centering
\psset{unit=0.55pt}
  \input{Rdrehung4}
\end{minipage}
\caption{Different rotations of a vector $\u$ that lead to the same result $\v$.}\label{f:2}
\end{figure}
For the product of just two vectors the information from the geometric product is sufficient to realign them, but for a whole vector field the detected rotation from the correlation will in general not be the correct one. Moxey et. al. already stated in \cite{MSE03} that the hypercomplex correlation can effectively compute the rotation over two images, but that the perfect mapping can only be found, if specific conditions hold, for example if the images consist of one color only.
%
% \begin{figure}[ht]
% \begin{minipage}{0.48\textwidth}
% \centering
% %\psset{unit=1pt}
%   \includegraphics[width=\textwidth]{bsp1w}
% \end{minipage}
% \hspace{0.2cm}
% \begin{minipage}{0.48\textwidth}
% \centering
% %\psset{unit=1pt}
%   \includegraphics[width=\textwidth]{bsp2w}
% \end{minipage}
% \caption{Left: vector field (\ref{bsp1a}). Right: vector field (\ref{bsp2a}). Original fields are depicted in black, rotated copies in red, corrected fields after application of the correlation rotor in blue using CLUCalc \cite{Per09}.}\label{f:3}
% \end{figure}

\begin{figure}[ht]
\centering
\subfigure[Vector field (\ref{bsp1a})]{\includegraphics[width=0.47\textwidth]{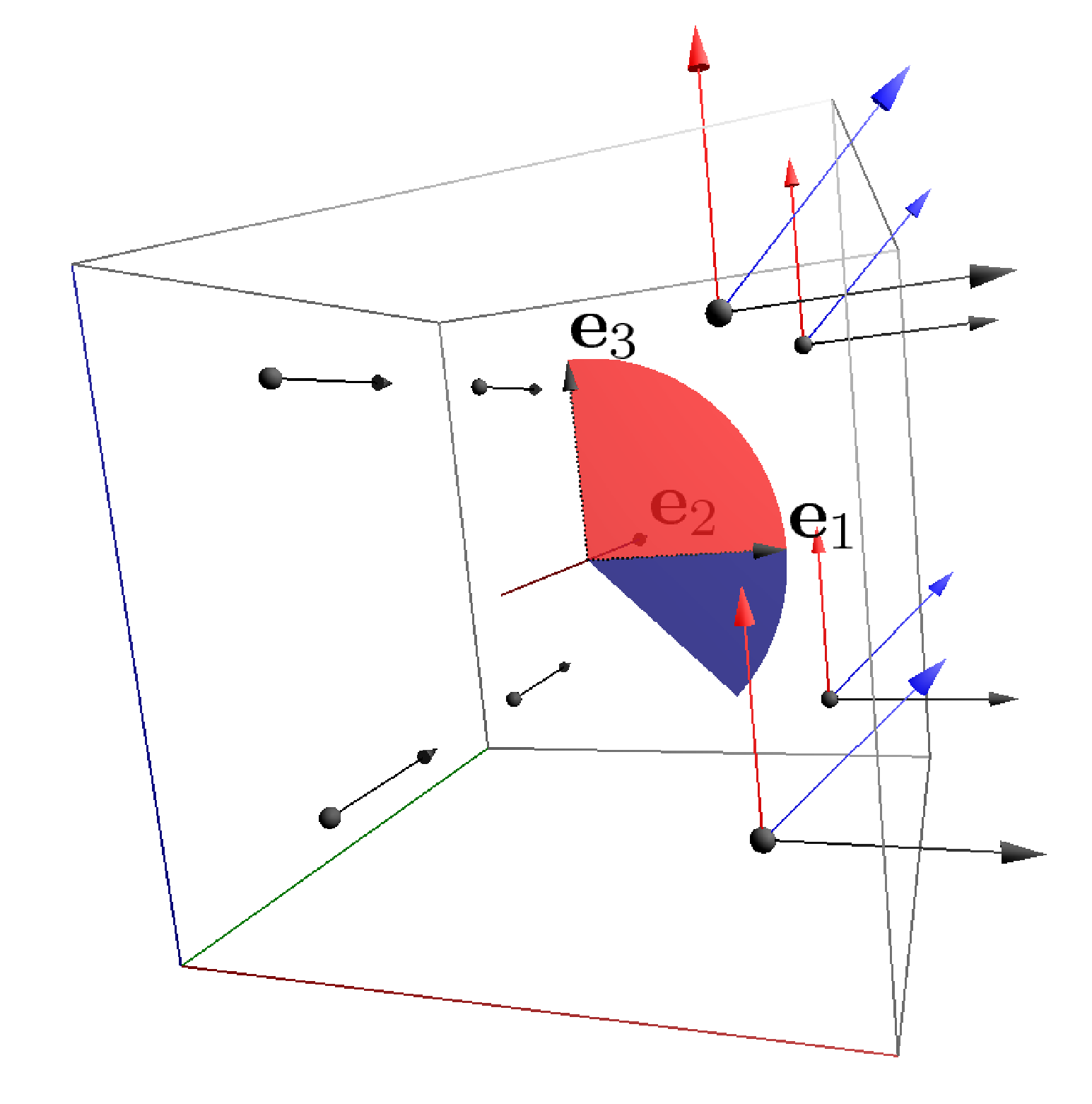}}\quad
\subfigure[Vector field (\ref{bsp2a})]{\includegraphics[width=0.47\textwidth]{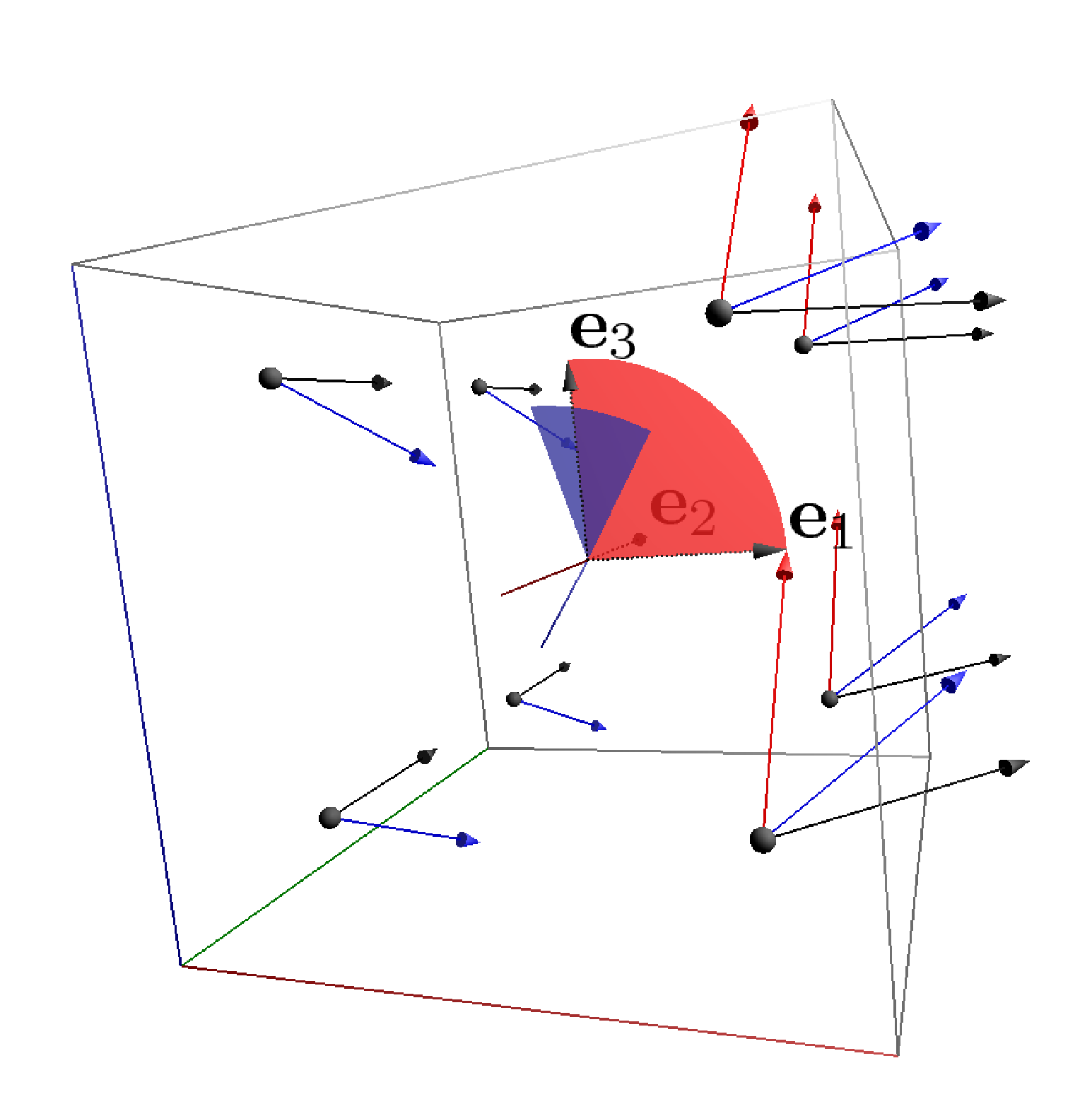}}
\caption{Example result of the geometric correlation. Original fields are depicted in black, rotated copies in red, corrected fields after application of the correlation rotor in blue using CLUCalc \cite{Per09}.}\label{f:3}
\end{figure}
\begin{ex}
The geometric correlation of the vector field
\begin{equation}
\begin{aligned}\label{bsp1a}
\v(\x)=&\begin{cases}
         \e_1,&\text{ for }x_1,x_2,x_3\in(-1,1), x_1\geq 0,\\
\e_2,&\text{ for }x_1,x_2,x_3\in(-1,1), x_1<0,\\
0,&\text{ else,}\\
        \end{cases}
\end{aligned}
\end{equation}
and its copy rotated by $\operatorname{R}_{\e_{13},\frac\pi2}$, which are shown on the left of Figure \ref{f:3}, suffices
\begin{equation}
\begin{aligned}
(\operatorname{R}_{\e_{13},\frac\pi2}(\v)\star\v)(0)
=&\int_{-1}^1\int_{-1}^1( \int_{0}^1\e_3\e_1 \d x_1
+\int_{-1}^0 \e_2\e_2\d x_1)\d x_3\d x_2\\
=&-4\e_{13}+4
\\=&\sqrt{32}e^{-\frac\pi4\e_{13}}.
\end{aligned}
\end{equation}
Here the unit bivector $\e_{13}$ indeed describes the rotational plane we looked for, but the angle $\frac\pi4$ is only half the angle of the original rotation. The result is that the restored field and the original one do not match.
\par
In general the correlation even detects a wrong rotational plane, consider for example
\begin{equation}
\begin{aligned}
\v(\x)=&\begin{cases}\label{bsp2a}
         \e_1+\e_2,&\text{ for }x_1,x_2,x_3\in(-1,1), x_1\geq 0,\\
\e_2,&\text{ for }x_1,x_2,x_3\in(-1,1), x_1<0,\\
0,&\text{ else,}\\
        \end{cases}
\end{aligned}
\end{equation}
rotated by $\operatorname{R}_{\e_{13},\frac\pi2}$, depicted on the right of Figure \ref{f:3}. The geometric correlation will return the plane spanned by $\e_{12}+\e_{13}+\e_{23}$ and the angle $\arctan(\frac{\sqrt{3}}{2})$, which are both incorrect.
\end{ex}
In the following section we will analyze how effective the correlation can calculate the rotation and prove, that despite the impression, the previous example gives, the geometric correlation contains enough information to reconstruct the misalignment.
%----------------------------------------------------------------------------------------------------------------------------
\section{Outer Rotation and the Geometric Correlation}
%----------------------------------------------------------------------------------------------------------------------------
The three-dimensional mathematically positive outer rotation (\ref{outer}) of a vector field by the angle $\alpha\in[0,\pi]$ along the plane $P$, spanned by the unit bivector $\vec P$, takes the shape
\begin{equation}
\begin{aligned}
\operatorname{R} _{P,\alpha}(\v(\x))=&e^{-\frac\alpha2 \vec P}\v(\x)e^{\frac\alpha2 \vec P}
=e^{-\alpha \vec P}\v_{\parallel\vec P}(\x)+\v_{\perp\vec P}(\x).
\end{aligned} 
\end{equation}
So the product of the vector field and its copy at any position
\begin{equation}
\begin{aligned}\label{outer_prod}
\operatorname{R} _{P,\alpha}(\v(\x))\v(\x)=&
e^{-\alpha \vec P}\v_{\parallel\vec P}(\x)^2+(e^{-\alpha \vec P}-1)\v_{\parallel\vec P}(\x)\v_{\perp\vec P}(\x)+\v_{\perp\vec P}(\x)^2
\end{aligned} 
\end{equation}
does usually not simply yield the rotation we looked for, like in the two-dimensional case, but a rather good approximation depending on the parallel and the orthogonal parts of the vector field with respect to the plane of rotation. In order to keep the notation short we partly drop the argument $\x$ of the vector fields $\v(\x)$ by just writing $\v$ and assume without loss of generality
\begin{equation}
\begin{aligned}\label{assumption}
||\operatorname{R} _{P,\alpha}(\v)||_{L^2}^2=||\v||_{L^2}^2=||\v_{\parallel\vec P}||_{L^2}^2+||\v_{\perp\vec P}||_{L^2}^2=1.
\end{aligned} 
\end{equation}
\begin{lem}\label{l:outer_prod}
Let $\v\in L^2(\R^m,\R^{3,0}\subset\clifford{3,0})$ be a square integrable vector field and $\operatorname{R} _{P,\alpha}(\v)$ its copy from an outer rotation. The rotational misalignment of $\operatorname{R} _{P,\alpha}(\v)$ does not increase if we apply the outer rotation encoded in the normalized geometric cross correlation 
\begin{equation}
\begin{aligned}
\frac{(\operatorname{R} _{P,\alpha}(\v)\star\v)(0)}{{||\operatorname{R} _{P,\alpha}(\v)||_{L^2}||\v||_{L^2}}}.
\end{aligned} 
\end{equation}
\end{lem}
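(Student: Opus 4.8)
The plan is to recast the statement as a monotonicity property of the scalar part of the correlation. Since $\v$ and $\operatorname{R}_{P,\alpha}(\v)$ are vector fields, reversion acts trivially on them, so $\overline{\operatorname{R}_{P,\alpha}(\v)}=\operatorname{R}_{P,\alpha}(\v)$ and the correlation
\begin{equation}
\begin{aligned}
C:=(\operatorname{R}_{P,\alpha}(\v)\star\v)(0)=\int_{\R^m}\operatorname{R}_{P,\alpha}(\v)\,\v\,\d^m\y
\end{aligned}
\end{equation}
is an even multivector, i.e.\ a scalar plus a bivector. Its scalar part $\langle C\rangle_0=\int_{\R^m}\operatorname{R}_{P,\alpha}(\v)\cdot\v\,\d^m\y$ is exactly the $L^2$ inner product of the two fields, and under the normalization (\ref{assumption}) we have $\tfrac12\|\operatorname{R}_{P,\alpha}(\v)-\v\|_{L^2}^2=1-\langle C\rangle_0$. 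Thus I would take the $L^2$ distance as the measure of rotational misalignment, so that the whole lemma reduces to showing that the corrected field has scalar correlation at least $\langle C\rangle_0$.

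\textbf{Explicit correlation and the corrected field.} Integrating (\ref{outer_prod}) term by term, with $a:=\|\v_{\parallel\vec P}\|_{L^2}^2$, $b:=\|\v_{\perp\vec P}\|_{L^2}^2$ (so $a+b=1$) and the bivector $\vec D:=\int_{\R^m}\v_{\parallel\vec P}\wedge\v_{\perp\vec P}\,\d^m\y$, one obtains $\langle C\rangle_0=a\cos\alpha+b=1-a(1-\cos\alpha)$, so the misalignment equals $a(1-\cos\alpha)$, while the bivector part $\langle C\rangle_2$ picks up contributions from $\vec D$ and from $\vec P\vec D$ that tilt it away from $\vec P$ --- precisely what produces the wrong plane in the motivating example. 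Because $C$ is even it normalizes to a rotor $R_C:=C/\|C\|$; I set $\rho:=\sqrt{R_C}$ (principal branch) and define the correction $\A':=\overline{\rho}\,\operatorname{R}_{P,\alpha}(\v)\,\rho$. Writing $\operatorname{R}_{P,\alpha}(\v)=\sigma\v\overline{\sigma}$ with $\sigma=e^{-\frac{\alpha}{2}\vec P}$ shows $\A'=(\overline{\rho}\sigma)\v\overline{(\overline{\rho}\sigma)}$ is again an outer rotation of $\v$, so the iteration stays inside the class of outer rotations; in the degenerate case $b=0$ one checks $\rho=\sigma$ and $\A'=\v$, recovering the two-dimensional situation of the Motivation.

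\textbf{Core mechanism.} By cyclicity of the scalar part, $\langle(\A'\star\v)(0)\rangle_0=\int_{\R^m}\operatorname{R}_{P,\alpha}(\v)\cdot(\rho\v\overline{\rho})\,\d^m\y$, so applying the correction to $\operatorname{R}_{P,\alpha}(\v)$ is, for the scalar part, the same as rotating $\v$ by $\operatorname{R}_{Q,\gamma}$, where $R_C=\cos\gamma-\sin\gamma\,\vec Q$ with unit bivector $\vec Q=-\langle C\rangle_2/|\langle C\rangle_2|$ and $\tan\gamma=|\langle C\rangle_2|/\langle C\rangle_0$. The key observation is that $\langle C\rangle_2=\int_{\R^m}\operatorname{R}_{P,\alpha}(\v)\wedge\v\,\d^m\y$ is, up to a positive constant, the gradient at the identity of the alignment functional $g\mapsto\int_{\R^m}(g\v\overline{g})\cdot\operatorname{R}_{P,\alpha}(\v)\,\d^m\y$; hence the correction rotates in the steepest-ascent plane $\vec Q$ and in the ascending direction. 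Restricting that functional to rotations in the single plane $\vec Q$ turns it into a sinusoid $r+p\cos t+q\sin t$ whose $\sin$-coefficient $q$ is proportional to $|\langle C\rangle_2|$ and with $r+p=\langle C\rangle_0$, so the whole problem collapses to the one-variable inequality $p\cos\gamma+q\sin\gamma\ge p$ evaluated at the extracted angle $\gamma=\arctan\!\big(|\langle C\rangle_2|/\langle C\rangle_0\big)$.

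\textbf{Main obstacle.} The difficulty is entirely in this last step: the gradient argument only guarantees improvement for an \emph{infinitesimal} step, whereas the correlation prescribes the finite angle $\gamma$, which could in principle overshoot the maximum of the sinusoid. I expect the decisive estimate to be a no-overshoot bound $\gamma\le 2t^{\ast}$ (where $t^{\ast}$ is the sinusoid's maximizer), equivalently a bound of the form $\sqrt{\langle C\rangle_0^{2}+|\langle C\rangle_2|^{2}}+\langle C\rangle_0\ge p$, proved from the explicit trigonometric relation between $\langle C\rangle_0$, $|\langle C\rangle_2|$ and the coefficients $p,q,r$, and crucially using $a+b=1$ together with the orthogonality $\vec D\perp\vec P$ to control $\langle C\rangle_2$. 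I would also isolate the edge case $R_C=-1$ (a misalignment of exactly $\pi$), where the square root is ambiguous and must be treated separately.
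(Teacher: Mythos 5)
Your setup is sound as far as it goes: the identities $\langle C\rangle_0=\cos(\alpha)\,\|\v_{\parallel\vec P}\|_{L^2}^2+\|\v_{\perp\vec P}\|_{L^2}^2$ and the decomposition of $\langle C\rangle_2$ into $\vec P$-, $\vec D$- and $\vec P\vec D$-components agree with the paper's (\ref{scphiq}) and (\ref{bivphiq}), and your reduction of the $L^2$-version of the claim to the one-variable inequality $p\cos\gamma+q\sin\gamma\ge p$ is a legitimate reformulation. But the argument stops exactly where the content of the lemma begins. The ``no-overshoot'' bound is not a technical refinement that can be deferred --- it \emph{is} the lemma: your gradient/steepest-ascent observation only controls an infinitesimal step, whereas the correlation prescribes the finite angle $\gamma$, and you give no argument that this finite step cannot pass the maximizer of the sinusoid and land worse than the starting point. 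You conjecture the needed estimate (``I expect the decisive estimate to be $\gamma\le 2t^{\ast}$ \dots proved from the explicit trigonometric relation'') but never derive it. The paper's proof is devoted almost entirely to this point: it forms the composed rotor $e^{\frac\beta2\vec R}=e^{\frac\alpha2\vec P}e^{\frac\varphi2\vec Q}$, computes its scalar and bivector parts in closed form, converts $\beta\le\alpha$ through a chain of equivalences into the inequality (\ref{beweisende}), and settles that with the Cauchy--Schwarz estimate (\ref{csi}). Your proposal contains no counterpart of this computation, so the assertion remains unproved.

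A second, lesser issue is the choice of misalignment measure. You measure misalignment by the $L^2$ quantity $1-\langle C\rangle_0$, while the paper measures it by the angle $\beta$ of the outer rotation relating the corrected field to $\v$. These are genuinely different: for an outer rotation by $\beta$ in the plane $\vec R$ one has $\tfrac12\|\operatorname{R}_{R,\beta}(\v)-\v\|_{L^2}^2=(1-\cos\beta)\,\|\v_{\parallel\vec R}\|_{L^2}^2$, which involves the parallel component with respect to the \emph{new} plane, so a decrease of one quantity does not formally imply a decrease of the other. For the lemma as worded either reading is defensible, but Theorem \ref{t:conv2} consumes precisely the angle statement (the series $\alpha_{n+1}=\beta(\alpha_n)$, together with the fact that the paper's chain of equivalences is sharp only at $\varphi=0$); even a completed $L^2$ argument would therefore leave an extra step owed to the convergence proof. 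Finally, the edge case you isolate ($R_C=-1$, i.e.\ $\varphi=\pi$) is handled in the paper not inside the lemma but by the perturbation in Line 7 of Algorithm \ref{alg3}, so isolating it here is reasonable but does not substitute for the missing main estimate.
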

\begin{proof}
%  The product of $\operatorname{R} _{P,\alpha}(\v(\x))$ and $\v(\x)$ suffices (\ref{outer_prod}), so the cross correlation at the origin yields
% \begin{equation}
% \begin{aligned}
% (\operatorname{R} _{P,\alpha}(\v)\star\v)(0)=&
% e^{-\alpha \vec P}\int\v_{\parallel\vec P}(\x)^2\d \x+(e^{-\alpha \vec P}-1)\int\v_{\parallel\vec P}(\x)\v_{\perp\vec P}(\x)\d \x
% \\&+\int\v_{\perp\vec P}(\x)^2\d \x
% \end{aligned} 
% \end{equation}
We denote the polar form of the normalized geometric cross correlation by $e^{\varphi\vec Q}$ with the unit bivector $\vec Q$ and $\varphi\in[0,\pi]$. So using (\ref{outer_prod}) and the assumption (\ref{assumption}) we get
\begin{equation}
\begin{aligned}
e^{\varphi\vec Q}
=&(\operatorname{R} _{P,\alpha}(\v)\star\v)(0)
\\=&
\int e^{-\alpha \vec P}\v_{\parallel\vec P}(\x)^2+(e^{-\alpha \vec P}-1)\v_{\parallel\vec P}(\x)\v_{\perp\vec P}(\x)
+\v_{\perp\vec P}(\x)^2\d^m \x
\\=&e^{-\alpha \vec P}||\v_{\parallel\vec P}||_{L^2}^2+(e^{-\alpha \vec P}-1)\int\v_{\parallel\vec P}\v_{\perp\vec P}\d \x
+||\v_{\perp\vec P}||_{L^2}^2
\end{aligned} 
\end{equation}
with the scalar part
\begin{equation}
\begin{aligned}\label{scphiq}
\cos(\varphi)=&\langle e^{\varphi\vec Q}\rangle_0
=\cos(\alpha)||\v_{\parallel\vec P}||_{L^2}^2+||\v_{\perp\vec P}||_{L^2}^2
\end{aligned} 
\end{equation}
and the bivector part
\begin{equation}
\begin{aligned}\label{bivphiq}
\sin(\varphi)\vec Q&=\langle e^{\varphi\vec Q}\rangle_2
\\&=-\sin(\alpha) \vec P||\v_{\parallel\vec P}||_{L^2}^2+(-\sin(\alpha) \vec P+\cos(\alpha)-1)\int\v_{\parallel\vec P}\v_{\perp\vec P}\d \x
\end{aligned} 
\end{equation}
with squared magnitude
\begin{equation}
\begin{aligned}
||\langle e^{\varphi\vec Q}\rangle_2||^2
=&\sin(\alpha)^2||\v_{\parallel\vec P}||_{L^2}^4+(2-2\cos(\alpha))\,||\int\v_{\parallel\vec P}\v_{\perp\vec P}\d \x||^2.
\end{aligned} 
\end{equation}
Thats why we know the explicit expressions for
\begin{equation}
\begin{aligned}\label{bivphiq2}
\vec Q
=&\frac{\langle e^{\varphi\vec Q}\rangle_2}{||\langle e^{\varphi\vec Q}\rangle_2||}
\end{aligned} 
\end{equation}
and
\begin{equation}
\begin{aligned}
\varphi
=&\operatorname{atan2}(||\langle e^{\varphi\vec Q}\rangle_2||,\langle e^{\varphi\vec Q}\rangle_0).
\end{aligned} 
\end{equation}
The outer rotation encoded in the correlation applied to $\operatorname{R} _{P,\alpha}(\v)$ takes the shape
\begin{equation}
\begin{aligned}
e^{-\frac\varphi2\vec Q}\operatorname{R} _{P,\alpha}(\v)e^{\frac\varphi2\vec Q}=&e^{-\frac\varphi2\vec Q}e^{-\frac\alpha2\vec P}\v e^{\frac\alpha2\vec P}e^{\frac\varphi2\vec Q}.
\end{aligned} 
\end{equation}
The composition of the two rotations is a rotation itself. It shall be written as $e^{-\frac\beta2\vec R}\v e^{\frac\beta2\vec R}$ with the unit bivector $\vec R$ and angle $\beta\in[0,\pi]$, so we get the relation
\begin{equation}
\begin{aligned}\label{beta_def}
e^{\frac\beta2\vec R}=e^{\frac\alpha2\vec P}e^{\frac\varphi2\vec Q}.
\end{aligned} 
\end{equation}
To determine whether or not it is smaller than the original one, it is sufficient to compare $\beta$ and $\alpha$. We will prove that $\beta\leq\alpha$ by proving the inequality
\begin{equation}
\begin{aligned}\label{beta<alpha}
\frac\beta2=\arg(e^{\frac\beta2\vec R})=\arg(e^{\frac\alpha2\vec P}e^{\frac\varphi2\vec Q})\leq\arg(e^{\frac\alpha2\vec P})=\frac\alpha2.
\end{aligned} 
\end{equation}
We evaluate (\ref{beta_def}) by inserting   
%$\vec Q =\langle e^{\varphi\vec Q}\rangle_2||\langle e^{\varphi\vec Q}\rangle_2||^{-1}$,
(\ref{bivphiq2}) and (\ref{bivphiq}) %simplified by means of addition theorems 
and get
\begin{equation}
\begin{aligned}\label{beta3}
e^{\frac\beta2\vec R}
=&e^{\frac\alpha2\vec P}e^{\frac\varphi2\vec Q}
\\=&\cos(\frac\alpha2)\cos(\frac\varphi2)+\cos(\frac\alpha2)\sin(\frac\varphi2)\vec Q+\sin(\frac\alpha2)\cos(\frac\varphi2)\vec P+\sin(\frac\alpha2)\sin(\frac\varphi2)\vec P\vec Q\\
=&\cos(\frac\alpha2)\cos(\frac\varphi2)+\sin(\frac\alpha2)\cos(\frac\varphi2)\vec P
+\frac{\sin(\frac\varphi2)}{||\langle e^{\varphi\vec Q}\rangle_2||}\big(-\cos(\frac\alpha2)\sin(\alpha) \vec P||\v_{\parallel\vec P}||_{L^2}^2
\\&+\cos(\frac\alpha2)(\cos(\alpha)-1)\int\v_{\parallel\vec P}\v_{\perp\vec P}\d \x
-\cos(\frac\alpha2)\sin(\alpha) \vec P\int\v_{\parallel\vec P}\v_{\perp\vec P}\d \x
\\&+\sin(\frac\alpha2)\sin(\alpha)||\v_{\parallel\vec P}||_{L^2}^2
+\sin(\frac\alpha2)(\cos(\alpha)-1)\vec P\int\v_{\parallel\vec P}\v_{\perp\vec P}\d \x
\\&+\sin(\frac\alpha2)\sin(\alpha) \int\v_{\parallel\vec P}\v_{\perp\vec P}\d \x\big),
\end{aligned} 
\end{equation}
and applying addition theorems 
on the $\int\v_{\parallel\vec P}\v_{\perp\vec P}\d \x$-parts 
leads to
\begin{equation}
\begin{aligned}\label{beta4}
&\big(\cos(\frac\alpha2)(\cos(\alpha)-1)\int\v_{\parallel\vec P}\v_{\perp\vec P}\d \x+\sin(\frac\alpha2)\sin(\alpha)\int\v_{\parallel\vec P}\v_{\perp\vec P}\d \x \big)\frac{\sin(\frac\varphi2)}{||\langle e^{\varphi\vec Q}\rangle_2||}
\\&=\big(\cos(\frac\alpha2)\cos(\alpha)+\sin(\frac\alpha2)\sin(\alpha)-\cos(\frac\alpha2)\big)\frac{\sin(\frac\varphi2)\int\v_{\parallel\vec P}\v_{\perp\vec P}\d \x}{||\langle e^{\varphi\vec Q}\rangle_2||}
\\&=\big(\cos(\frac\alpha2-\alpha)-\cos(\frac\alpha2)\big)\frac{\sin(\frac\varphi2)\int\v_{\parallel\vec P}\v_{\perp\vec P}\d \x}{||\langle e^{\varphi\vec Q}\rangle_2||}
\\&=0,
\end{aligned} 
\end{equation}
and on the $\vec P\int\v_{\parallel\vec P}\v_{\perp\vec P}\d \x$-parts to
\begin{equation}
\begin{aligned}\label{beta5}
&\big(\sin(\frac\alpha2)(\cos(\alpha)-1)\vec P\int\v_{\parallel\vec P}\v_{\perp\vec P}\d \x
-\cos(\frac\alpha2)\sin(\alpha) \vec P\int\v_{\parallel\vec P}\v_{\perp\vec P}\d \x\big)\frac{\sin(\frac\varphi2)}{||\langle e^{\varphi\vec Q}\rangle_2||}
\\&=\big(\sin(\frac\alpha2)\cos(\alpha)-\cos(\frac\alpha2)\sin(\alpha)-\sin(\frac\alpha2)\big)\frac{\sin(\frac\varphi2)\vec P\int\v_{\parallel\vec P}\v_{\perp\vec P}\d \x}{||\langle e^{\varphi\vec Q}\rangle_2||}
\\&=\big(\sin(\frac\alpha2-\alpha)-\sin(\frac\alpha2)\big)\frac{\sin(\frac\varphi2)\vec P\int\v_{\parallel\vec P}\v_{\perp\vec P}\d \x}{||\langle e^{\varphi\vec Q}\rangle_2||}
\\&=-2\sin(\frac\alpha2)\frac{\sin(\frac\varphi2)\vec P\int\v_{\parallel\vec P}\v_{\perp\vec P}\d \x}{||\langle e^{\varphi\vec Q}\rangle_2||}.
\end{aligned} 
\end{equation}
We insert (\ref{beta4}) and (\ref{beta5}) in (\ref{beta3}) and get
\begin{equation}
\begin{aligned}
e^{\frac\beta2\vec R}
=&\cos(\frac\alpha2)\cos(\frac\varphi2)+\sin(\frac\alpha2)\cos(\frac\varphi2)\vec P
\\&+\frac{\sin(\frac\varphi2)}{||\langle e^{\varphi\vec Q}\rangle_2||}\big(-\cos(\frac\alpha2)\sin(\alpha) \vec P||\v_{\parallel\vec P}||_{L^2}^2
\\&+\sin(\frac\alpha2)\sin(\alpha)||\v_{\parallel\vec P}||_{L^2}^2
-2\sin(\frac\alpha2)\vec P\int\v_{\parallel\vec P}\v_{\perp\vec P}\d \x\big).
\end{aligned} 
\end{equation}
% The direction of this new misalignment is of no interest for us. We want to analyze the magniude of it from
% \begin{equation}
% \begin{aligned}
% \frac\beta2=&\operatorname{atan2}(||\langle e^{\frac\beta2\vec R}\rangle_2||,\langle e^{\frac\beta2\vec R}\rangle_0).
% \end{aligned} 
% \end{equation}
Its scalar part
\begin{equation}
\begin{aligned}\label{sc}
\langle e^{\frac\beta2\vec R}\rangle_0
=&\cos(\frac\alpha2)\cos(\frac\varphi2)
+\frac{1}{||\langle e^{\varphi\vec Q}\rangle_2||}\sin(\frac\alpha2)\sin(\frac\varphi2)\sin(\alpha)||\v_{\parallel\vec P}||_{L^2}^2
\end{aligned} 
\end{equation}
is generally positive, because $\alpha,\varphi\in[0,\pi]$
%. So the $\operatorname{atan2}$ is the arctangent. The 
and the bivector part 
\begin{equation}
\begin{aligned}
\langle e^{\frac\beta2\vec R}\rangle_2
=&\sin(\frac\alpha2)\cos(\frac\varphi2)\vec P-\frac{1}{||\langle e^{\varphi\vec Q}\rangle_2||}
\cos(\frac\alpha2)\sin(\frac\varphi2)\sin(\alpha)||\v_{\parallel\vec P}||_{L^2}^2\vec P
\\&-\frac{2}{||\langle e^{\varphi\vec Q}\rangle_2||}\sin(\frac\alpha2)\sin(\frac\varphi2)\vec P\int\v_{\parallel\vec P}\v_{\perp\vec P}\d \x
\end{aligned} 
\end{equation}
has the squared norm
\begin{equation}
\begin{aligned}\label{bivnorm}
||\langle e^{\frac\beta2\vec R}\rangle_2||^2
=&\sin(\frac\alpha2)^2\cos(\frac\varphi2)^2
-\frac{1}{||\langle e^{\varphi\vec Q}\rangle_2||}\cos(\frac\varphi2)\sin(\frac\varphi2)\sin(\alpha)^2||\v_{\parallel\vec P}||_{L^2}^2
\\&+\frac{1}{||\langle e^{\varphi\vec Q}\rangle_2||^2}\cos(\frac\alpha2)^2\sin(\frac\varphi2)^2\sin(\alpha)^2||\v_{\parallel\vec P}||_{L^2}^4
\\&+\frac{4}{||\langle e^{\varphi\vec Q}\rangle_2||^2}\sin(\frac\alpha2)^2\sin(\frac\varphi2)^2||\int\v_{\parallel\vec P}\v_{\perp\vec P}\d \x||^2.
\end{aligned} 
\end{equation}
For the next inequalities we use, that all appearing parts are positive and that the tangent and the quadratic function are monotonically increasing for positive arguments. We get
\begin{equation}
\begin{aligned}
\frac\beta2\leq\frac\alpha2
&\Leftrightarrow
\arctan(\frac{||\langle e^{\frac\beta2\vec R}\rangle_2||}{\langle e^{\frac\beta2\vec R}\rangle_0})
\leq\arctan(\frac{\sin(\frac\alpha2)}{\cos(\frac\alpha2)})
\\&\Leftrightarrow
\frac{||\langle e^{\frac\beta2\vec R}\rangle_2||}{\langle e^{\frac\beta2\vec R}\rangle_0}
\leq\frac{\sin(\frac\alpha2)}{\cos(\frac\alpha2)}
% \\&\Leftrightarrow
% ||\langle e^{\frac\beta2\vec R}\rangle_2||\cos(\frac\alpha2)
% \leq\sin(\frac\alpha2)\langle e^{\frac\beta2\vec R}\rangle_0
% \\&\Leftrightarrow
% ||\langle e^{\frac\beta2\vec R}\rangle_2||^2\cos(\frac\alpha2)^2
% \leq\sin(\frac\alpha2)^2\langle e^{\frac\beta2\vec R}\rangle_0^2
\\&\Leftrightarrow
||\langle e^{\frac\beta2\vec R}\rangle_2||^2\cos(\frac\alpha2)^2||\langle e^{\varphi\vec Q}\rangle_2||^2
\leq\sin(\frac\alpha2)^2\langle e^{\frac\beta2\vec R}\rangle_0^2||\langle e^{\varphi\vec Q}\rangle_2||^2.
\end{aligned} 
\end{equation}
Now we insert the scalar part (\ref{sc}) and the bivector norm (\ref{bivnorm})
\begin{equation}
\begin{aligned}
&\sin(\frac\alpha2)^2\cos(\frac\alpha2)^2\cos(\frac\varphi2)^2||\langle e^{\varphi\vec Q}\rangle_2||^2
+\sin(\frac\varphi2)^2\cos(\frac\alpha2)^4\sin(\alpha)^2||\v_{\parallel\vec P}||_{L^2}^4
\\&-||\langle e^{\varphi\vec Q}\rangle_2|| \cos(\frac\varphi2)\sin(\frac\varphi2)\cos(\frac\alpha2)^2\sin(\alpha)^2||\v_{\parallel\vec P}||_{L^2}^2
\\&+4\sin(\frac\varphi2)^2\sin(\frac\alpha2)^2\cos(\frac\alpha2)^2||\int\v_{\parallel\vec P}\v_{\perp\vec P}\d \x||^2
\\\leq
&\sin(\frac\alpha2)^2\cos(\frac\alpha2)^2\cos(\frac\varphi2)^2||\langle e^{\varphi\vec Q}\rangle_2||^2
+\sin(\frac\alpha2)^4\sin(\frac\varphi2)^2\sin(\alpha)^2||\v_{\parallel\vec P}||_{L^2}^4
\\&+2\cos(\frac\alpha2)\cos(\frac\varphi2) \sin(\frac\alpha2)^3\sin(\frac\varphi2)\sin(\alpha)||\langle e^{\varphi\vec Q}\rangle_2||\,||\v_{\parallel\vec P}||_{L^2}^2,
\end{aligned} 
\end{equation}
remove the identical parts, identify $2\sin(\frac\alpha2)\cos(\frac\alpha2)=\sin(\alpha)$, divide both sides by $\sin(\frac\varphi2)\sin(\alpha)^2$, and get
\begin{equation}
\begin{aligned}\label{bew_b>a_absch}
\Leftrightarrow& \sin(\frac\varphi2)\cos(\frac\alpha2)^4||\v_{\parallel\vec P}||_{L^2}^4
-||\langle e^{\varphi\vec Q}\rangle_2|| \cos(\frac\varphi2)\cos(\frac\alpha2)^2||\v_{\parallel\vec P}||_{L^2}^2
\\&+\sin(\frac\varphi2)||\int\v_{\parallel\vec P}\v_{\perp\vec P}\d \x||^2
\\&\leq
\sin(\frac\alpha2)^4\sin(\frac\varphi2)||\v_{\parallel\vec P}||_{L^2}^4
+\cos(\frac\varphi2) \sin(\frac\alpha2)^2||\langle e^{\varphi\vec Q}\rangle_2||\,||\v_{\parallel\vec P}||_{L^2}^2.
 \\\Leftrightarrow&
 \sin(\frac\varphi2)\big((\cos(\frac\alpha2)^4-\sin(\frac\alpha2)^4)||\v_{\parallel\vec P}||_{L^2}^4
+||\int\v_{\parallel\vec P}\v_{\perp\vec P}\d \x||^2\big)
\\&\leq
\cos(\frac\varphi2) ||\langle e^{\varphi\vec Q}\rangle_2||( \sin(\frac\alpha2)^2+\cos(\frac\alpha2)^2)||\v_{\parallel\vec P}||_{L^2}^2
\\\Leftrightarrow&
 \sin(\frac\varphi2)(\cos(\alpha)||\v_{\parallel\vec P}||_{L^2}^4
+||\int\v_{\parallel\vec P}\v_{\perp\vec P}\d \x||^2)
\leq
\cos(\frac\varphi2) ||\langle e^{\varphi\vec Q}\rangle_2||\,||\v_{\parallel\vec P}||_{L^2}^2.
\end{aligned} 
\end{equation}
In (\ref{bew_b>a_absch}) we have identified $\cos(\frac\alpha2)^4-\sin(\frac\alpha2)^4=\cos(\alpha)$ and $\sin(\frac\alpha2)^2+\cos(\frac\alpha2)^2=1$. We further use (\ref{scphiq}) to replace $\cos(\alpha)$ by $(\cos(\varphi)-||\v_{\perp\vec P}||_{L^2}^2)||\v_{\parallel\vec P}||_{L^2}^{-2}$ and (\ref{bivphiq}) to replace $||\langle e^{\varphi\vec Q}\rangle_2||$ by $\sin(\varphi)$, which leads to
\begin{equation}
\begin{aligned}\label{beweisende}
\Leftrightarrow
&\sin(\frac\varphi2)\big((\cos(\varphi)-||\v_{\perp\vec P}||_{L^2}^2)||\v_{\parallel\vec P}||_{L^2}^2
+||\int\v_{\parallel\vec P}\v_{\perp\vec P}\d \x||^2\big)
\\&\leq
\cos(\frac\varphi2) \sin(\varphi)||\v_{\parallel\vec P}||_{L^2}^2
\\\Leftrightarrow
&\sin(\frac\varphi2)\big((2\cos(\frac\varphi2)^2-1-||\v_{\perp\vec P}||_{L^2}^2)||\v_{\parallel\vec P}||_{L^2}^2
+||\int\v_{\parallel\vec P}\v_{\perp\vec P}\d \x||^2\big)
\\&\leq
2\sin(\frac\varphi2)\cos(\frac\varphi2)^2||\v_{\parallel\vec P}||_{L^2}^2
\\\Leftrightarrow
&\sin(\frac\varphi2)\big(-||\v_{\parallel\vec P}||_{L^2}^2-||\v_{\parallel\vec P}||_{L^2}^2||\v_{\perp\vec P}||_{L^2}^2
+||\int\v_{\parallel\vec P}\v_{\perp\vec P}\d \x||^2\big)
\leq
0.
\end{aligned} 
\end{equation}
The Cauchy Schwartz inequality (CSI) of $L^2(\R)$ guarantees
\begin{equation}
\begin{aligned}\label{csi}
||\int\v_{\parallel\vec P}(\x)\v_{\perp\vec P}(\x)\d\x||^2
\leq&
(\int ||\v_{\parallel\vec P}(\x)\v_{\perp\vec P}(\x)||\d\x)^2
\\=&
(\int ||\v_{\parallel\vec P}(\x)||\,||\v_{\perp\vec P}(\x)||\d\x)^2
\\=&\langle||\v_{\parallel\vec P}(\x)||,||\v_{\perp\vec P}(\x)||\rangle_{L^2}^2
\\\overset{\text{CSI}}\leq&
||\v_{\parallel\vec P}||_{L^2}^2||\v_{\perp\vec P}||_{L^2}^2,
\end{aligned} 
\end{equation}
so we know, that the part in the brackets in the last line of (\ref{beweisende}) is always negative. The sine is always positive for $\varphi\in[0,\pi]$. Therefore in the shape of (\ref{beweisende}) it is easy to recognize that the inequality $\beta\leq\alpha$ is generally fulfilled.\smartqed\qed
\end{proof}
We want to apply Lemma \ref{l:outer_prod} repeatedly and construct a series of decreasing angles, describing the remaining misalignment of the vector fields. The next theorem shows that the misalignment vanishes by iteration.
\begin{thm}\label{t:conv2}
For a square integrable vector field $\v\in L^2(\R^m,\R^{3,0}\subset\clifford{3,0})$ let $\beta:[0,\pi)\to[0,\pi)$ be a function defined by $\beta(\alpha)=2\arg(e^{\frac\alpha2 \vec P}e^{\frac\varphi2 \vec Q})$ with 
\begin{equation}
\begin{aligned}
e^{\varphi\vec Q}=\frac{(\operatorname{R} _{P,\alpha}(\v)\star\v)(0)}{{||\operatorname{R} _{P,\alpha}(\v)||_{L^2}||\v||_{L^2}}}.
\end{aligned} 
\end{equation}
Then the series $\alpha_0=\alpha,\alpha_{n+1}=\beta(\alpha_{n})$ converges to zero for all $\alpha\in[0,\pi)$.
\end{thm}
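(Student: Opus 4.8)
The plan is to exploit the monotonicity already established in Lemma \ref{l:outer_prod} and combine it with a fixed-point argument. Since Lemma \ref{l:outer_prod} gives $\beta(\alpha)\leq\alpha$ and $\beta$ maps $[0,\pi)$ into $[0,\pi)$, the iterates $\alpha_0=\alpha,\ \alpha_{n+1}=\beta(\alpha_n)$ form a sequence that is monotonically decreasing and bounded below by $0$. By the monotone convergence theorem for real sequences it therefore converges to some limit $L\in[0,\alpha]$, and the whole task reduces to identifying this limit as $L=0$.

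To pin down the limit I would first upgrade the weak inequality of the Lemma to a strict one. The chain of equivalences culminating in (\ref{beweisende}) is built from the strictly monotone maps $\arctan$ and $t\mapsto t^2$ (on nonnegative arguments), so it in fact reads $\beta(\alpha)<\alpha \iff \sin(\tfrac\varphi2)\,\big(-||\v_{\parallel\vec P}||_{L^2}^2-||\v_{\parallel\vec P}||_{L^2}^2||\v_{\perp\vec P}||_{L^2}^2+||\int\v_{\parallel\vec P}\v_{\perp\vec P}\d\x||^2\big)<0$. The Cauchy--Schwarz estimate (\ref{csi}) bounds the bracket above by $-||\v_{\parallel\vec P}||_{L^2}^2$, which is strictly negative whenever $\v_{\parallel\vec P}\neq0$; and (\ref{scphiq}) shows $\varphi>0$, hence $\sin(\tfrac\varphi2)>0$, precisely for $\alpha\in(0,\pi)$ in this non-degenerate case. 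Thus $\beta(\alpha)<\alpha$ holds strictly on $(0,\pi)$, so $0$ is the \emph{only} fixed point of $\beta$ in $[0,\pi)$.

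Next I would verify that $\beta$ is continuous on $[0,\pi)$, which is what lets me pass to the limit in $\alpha_{n+1}=\beta(\alpha_n)$. Here I would use the explicit formulas: $\varphi$ is a continuous function of $\alpha$ through (\ref{scphiq})--(\ref{bivphiq}), because the quantities $||\v_{\parallel\vec P}||_{L^2}$, $||\v_{\perp\vec P}||_{L^2}$ and $\int\v_{\parallel\vec P}\v_{\perp\vec P}\d\x$ do not depend on $\alpha$, and $\beta(\alpha)=2\operatorname{atan2}(||\langle e^{\frac\beta2\vec R}\rangle_2||,\langle e^{\frac\beta2\vec R}\rangle_0)$ is assembled from (\ref{sc}) and (\ref{bivnorm}). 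The only place a singularity could enter is through the factor $1/||\langle e^{\varphi\vec Q}\rangle_2||=1/\sin\varphi$; after cancelling $\sin(\tfrac\varphi2)$ this reduces to a harmless $1/(2\cos(\tfrac\varphi2))$, so the sole genuine obstruction would be $\cos(\tfrac\varphi2)=0$, i.e.\ $\varphi=\pi$, which by (\ref{scphiq}) forces $\alpha=\pi$ and is therefore excluded on $[0,\pi)$. Since moreover the scalar part (\ref{sc}) stays strictly positive there, $\operatorname{atan2}$ is evaluated with a positive second argument, where it is continuous; hence $\beta$ is continuous. Taking $n\to\infty$ then gives $L=\beta(L)$, and the strict inequality forces $L=0$. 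An equivalent route avoids naming the fixed point: on any compact $[\delta,\alpha]\subset(0,\pi)$ the continuous, strictly positive gap $\alpha\mapsto\alpha-\beta(\alpha)$ attains a positive minimum, so the iterates cannot stall above any $\delta>0$.

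The main obstacle I anticipate is exactly this passage from the \emph{non-strict} decrease of Lemma \ref{l:outer_prod} to genuine decay of the sequence to zero: one must both sharpen the inequality to $\beta(\alpha)<\alpha$ and control the regularity of $\beta$ near its fixed point, checking that the explicit expression does not blow up on $[0,\pi)$. A related point needing explicit attention is the degenerate case $\v_{\parallel\vec P}=0$, in which the outer rotation acts trivially ($\operatorname{R}_{P,\alpha}(\v)=\v$), the correlation reduces to a positive scalar, $\varphi=0$, and $\beta(\alpha)=\alpha$ for every $\alpha$; here the two fields already coincide, so there is no misalignment to remove, and the statement should be read under the assumption $\v_{\parallel\vec P}\neq0$, i.e.\ that the prescribed rotation genuinely moves $\v$.
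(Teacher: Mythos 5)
Your proposal is correct and follows essentially the same route as the paper's proof: monotone decrease from Lemma \ref{l:outer_prod} plus boundedness gives a limit, continuity of $\beta$ makes that limit a fixed point, and the biconditional chain from the lemma's proof together with the Cauchy--Schwarz estimate (\ref{csi}) shows the equality case forces $\varphi=0$, hence the limit is zero. The details you add — justifying continuity via the cancellation of $1/\sin\varphi$ against $\sin(\tfrac\varphi2)$, and explicitly flagging the degenerate case $\v_{\parallel\vec P}=0$, where the angle sequence stalls and the paper simply calls the series ``trivial'' — are points the paper asserts or glosses over rather than proves, but they do not change the structure of the argument.
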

\begin{proof}
If $\alpha=0$ or $||\v_{\parallel\vec P}||_{L^2}=0$ the series is trivial because $\operatorname{R} _{P,\alpha}(\v)=\v$ almost everywhere. From now on let $\alpha\neq0$ and $||\v_{\parallel\vec P}||_{L^2}\neq0$. Lemma \ref{l:outer_prod} shows that the magnitudes of the series are monotonically decreasing. Since they are bound from below by zero the series is convergent. We denote the limit by $a=\lim_{n\to\infty}\alpha_n$. The function $\beta(\alpha)$ is continuous, so we can swap it and the limit
\begin{equation}
\begin{aligned}\label{beta=alpha}
a=\lim_{n\to\infty}\alpha_{n+1}=\lim_{n\to\infty}\beta(\alpha_n)=\beta(\lim_{n\to\infty} \alpha_n)=\beta(a).
\end{aligned} 
\end{equation}
This equality is the sharp case of the inequality (\ref{beta<alpha}). For $\alpha\neq0$ also $\varphi\neq0$, because from (\ref{scphiq}), $||\v_{\parallel\vec P}||_{L^2}^2+||\v_{\perp\vec P}||_{L^2}^2=1$, and $||\v_{\parallel\vec P}||_{L^2}\neq0$ we get
\begin{equation}
\begin{aligned}\label{alpha0phi0}
\varphi=0
\Leftrightarrow
1=\cos(\varphi)=\cos(\alpha)||\v_{\parallel\vec P}||_{L^2}^2+||\v_{\perp\vec P}||_{L^2}^2
\Leftrightarrow
\cos(\alpha)=1
\Leftrightarrow
\alpha=0
\end{aligned} 
\end{equation} 
Therefore the transformative steps that lead from (\ref{beta<alpha}) to (\ref{beweisende}) in the proof of Lemma \ref{l:outer_prod} are biconditional. So analogously to these steps (\ref{beta=alpha}) is equivalent to
\begin{equation}
\begin{aligned}
&\sin(\frac\varphi2)(-||\v_{\parallel\vec P}||_{L^2}^2-||\v_{\parallel\vec P}||_{L^2}^2||\v_{\perp\vec P}||_{L^2}^2
+||\int\v_{\parallel\vec P}\v_{\perp\vec P}\d \x||^2)
=
0
\end{aligned} 
\end{equation}
The part in the brackets is strictly negative, because of the Cauchy Schwartz inequality (\ref{csi}) and $||\v_{\parallel\vec P}||_{L^2}\neq0$, therefore equality can only occur for $\sin(\frac\varphi2)=0$, which means $\varphi=0$. Like in (\ref{alpha0phi0}) this leaves $a=0$ as the only possible limit.\smartqed\qed
\end{proof}
%----------------------------------------------------------------------------------------------------------------------------
\section{Algorithm and Experiments}
%----------------------------------------------------------------------------------------------------------------------------
Motivated by Theorem \ref{t:conv2} we present Algorithm \ref{alg3} for the iterative detection of outer rotations of vector fields using geometric cross correlation. It has been designed with attention to the efficient use of memory and to handle possible exceptions. In the case of $\alpha=\pi$ the correlation might be real valued and can not be distinguished from the cases where no rotation is necessary. To fix this exception we suggest an artificial disturbance after the first step of the algorithm, compare Line 7 in Algorithm \ref{alg3}. If it is not real the new misalignment will be smaller, because any misalignment is smaller than $\pi$. 
In all other cases Theorem \ref{t:conv2} guarantees the convergence. Our results also apply to the geometric product of the vector fields at any position $\x\in\R^m,\v_{\parallel\vec P}(\x)\neq0$, but we prefer the geometric correlation because of its robustness.
\begin{cor}
Algorithm \ref{alg3} returns the correct rotational misalignment for any three-dimensional linear vector field and its copy generated from an arbitrary outer rotation.
\end{cor}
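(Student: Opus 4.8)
The plan is to show that the field held at every iteration of Algorithm~\ref{alg3} is again an outer rotation of the original $\v$, so that each step is a genuine instance of Lemma~\ref{l:outer_prod}, and then to pass to the limit and read off the recovered rotor. First I would check that the class of outer rotations of $\v$ is invariant under one step of the algorithm: if the current field is $\operatorname{R}_{\vec R_n,\alpha_n}(\v)=e^{-\frac{\alpha_n}{2}\vec R_n}\v e^{\frac{\alpha_n}{2}\vec R_n}$ and we apply the correction rotor $e^{\frac{\varphi_n}{2}\vec Q_n}$ obtained from the normalized correlation of this field with $\v$, then the composition of two rotations is again a rotation, so the new field equals $\operatorname{R}_{\vec R_{n+1},\alpha_{n+1}}(\v)$ with $e^{\frac{\alpha_{n+1}}{2}\vec R_{n+1}}=e^{\frac{\alpha_n}{2}\vec R_n}e^{\frac{\varphi_n}{2}\vec Q_n}$. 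Hence at each step we are exactly in the situation of Lemma~\ref{l:outer_prod}, only with the running plane $\vec R_n$ in place of $\vec P$.

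The role of linearity is to supply the two standing hypotheses needed throughout. A linear, full-rank field $\v(\x)=M\x$ is square integrable on the bounded sampling domain, and its image is at least two-dimensional, so its projection $\v_{\parallel\vec P}$ onto an arbitrary plane $\vec P$ cannot vanish identically; thus $||\v_{\parallel\vec R_n}||_{L^2}\neq 0$ at every step and for every plane. This is precisely the non-degeneracy assumed in Lemma~\ref{l:outer_prod} and Theorem~\ref{t:conv2}, and it also rules out the degenerate situation in which a rotation would fix $\v$ and the misalignment would be genuinely ambiguous. Granting this, Lemma~\ref{l:outer_prod} gives $\alpha_{n+1}\leq\alpha_n$ with equality only at $\alpha_n=0$, so the angles form a monotone decreasing sequence bounded below by zero and therefore converge to some $a\geq 0$.

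To conclude $a=0$ I would adapt the fixed-point argument of Theorem~\ref{t:conv2}, and this is where I expect the main obstacle: the theorem keeps the rotation plane fixed, whereas here $\vec R_n$ genuinely moves from step to step. I would handle this by compactness of the unit-bivector sphere, extracting a subsequence $\vec R_{n_k}\to\vec R_\ast$; since $\v_{\parallel\vec R}$ depends continuously on $\vec R$ and the $\operatorname{atan2}$/$\arg$ extraction is continuous away from zero (denominators nonzero by non-degeneracy), the one-step update map is jointly continuous in $(\alpha,\vec R)$. Passing to the limit along the subsequence then yields the equality case of (\ref{beta<alpha}) for the plane $\vec R_\ast$. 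At that point the Cauchy--Schwarz inequality (\ref{csi}) together with $||\v_{\parallel\vec R_\ast}||_{L^2}\neq 0$ forces the bracketed expression in (\ref{beweisende}) to be strictly negative, so equality is possible only for $\varphi=0$, which via (\ref{scphiq}) gives $a=0$.

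Finally I would turn vanishing angle into a correct recovered transform. Telescoping the recursion gives $e^{\frac{\alpha_{n+1}}{2}\vec R_{n+1}}=e^{\frac{\alpha}{2}\vec P}\,e^{\frac{\varphi_0}{2}\vec Q_0}\cdots e^{\frac{\varphi_n}{2}\vec Q_n}$, so the accumulated correction rotor $S_n=\prod_{k=0}^{n}e^{\frac{\varphi_k}{2}\vec Q_k}=e^{-\frac{\alpha}{2}\vec P}\,e^{\frac{\alpha_{n+1}}{2}\vec R_{n+1}}$; since $\alpha_{n+1}\to 0$ and the unit bivectors stay bounded, $S_n\to e^{-\frac{\alpha}{2}\vec P}$. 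Therefore the total transform the algorithm applies, $\overline{S_n}(\cdot)S_n$, converges to $\operatorname{R}_{P,-\alpha}$, the exact inverse of the applied outer rotation, so the detected misalignment is the correct $\operatorname{R}_{P,\alpha}$. The one exceptional case $\alpha=\pi$, where the first correlation may be real and the plane undetectable, is cleared by the artificial disturbance of Line~7 of Algorithm~\ref{alg3}: afterwards the correlation is non-real and the residual misalignment is strictly below $\pi$, so the convergence argument above applies from the next step onward.
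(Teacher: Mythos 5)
Your proposal is correct, and its skeleton is the one the paper uses: the paper's entire justification of the corollary is the paragraph preceding it, namely that Theorem \ref{t:conv2} gives convergence of the angle sequence for $\alpha\in[0,\pi)$ and that Line 7 of Algorithm \ref{alg3} removes the $\alpha=\pi$ exception. What distinguishes your write-up is that it patches two genuine gaps the paper glosses over. First, Theorem \ref{t:conv2} iterates a scalar function $\beta$ defined with the rotation plane held \emph{fixed}, whereas the residual plane $\vec R_n$ in the algorithm genuinely moves from step to step (by (\ref{bivphiq}) the correction bivector $\vec Q$ leaves the plane $\vec P$ whenever $\int\v_{\parallel\vec P}\v_{\perp\vec P}\d^m \x\neq0$); the paper silently identifies the two iterations. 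Your fix --- monotonicity from Lemma \ref{l:outer_prod} applied with the running plane, compactness of the set of unit bivectors, joint continuity of the one-step map, and the equality case at a limit point $(a,\vec R_\ast)$, where (\ref{csi}) together with $||\v_{\parallel\vec R_\ast}||_{L^2}\neq0$ forces strictness as in (\ref{beweisende}) --- is exactly what is needed to make the fixed-point argument legitimate for the actual algorithm. Second, the paper never converts ``$\alpha_n\to0$'' into a statement about the \emph{returned} rotor; your telescoping identity $S_n=e^{-\frac{\alpha}{2}\vec P}e^{\frac{\alpha_{n+1}}{2}\vec R_{n+1}}\to e^{-\frac{\alpha}{2}\vec P}$ supplies that missing step and matches what Lines 12--14 of the algorithm accumulate. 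Two caveats: you tacitly strengthen ``any linear field'' to ``full rank'' --- for $\operatorname{rank}\leq1$ fields a rotation can fix $\v$, the applied rotor is not uniquely recoverable, and the corollary then holds only in the weaker sense that the output aligns the two fields, an ambiguity the paper also ignores; and the claim that the Line-7 disturbance always leaves a non-real correlation with residual angle strictly below $\pi$ is asserted rather than proved both by you and by the paper (if the unknown plane is orthogonal to $\e_{12}$, the disturbed angle can remain $\pi$), so on that point you are exactly as rigorous as the original, no less and no more.
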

%
%\begin{small}
\begin{algorithm}
\caption{Detection of outer misalignment of vector fields in 3D}
\label{alg3}
\begin{algorithmic}[1]
\REQUIRE vector field: $\v(\x)$, rotated pattern: $\u(\x)$, desired accuracy: $\varepsilon>0$,
\STATE $\varphi=\pi,\alpha=0, \vec P=\e_{12},iter=0$,
\WHILE{$\varphi>\varepsilon$}
  \STATE $iter++$,
  \STATE $Cor=(\u(\x)\star \v(\x))(0)$,
  \STATE $\varphi=\arg(Cor)$,
  \STATE $\vec Q=\langle Cor\rangle_2|\langle Cor\rangle_2|^{-1}$,
  \IF {$iter=1$ and $\varphi=0$}
    \STATE $\varphi=\pi/4$,
    \STATE $\vec Q=\e_{12}$,
  \ENDIF
  \STATE $\u(\x)=e^{-\frac\varphi2\vec Q}\u(\x)e^{\frac\varphi2\vec Q}$,
  \STATE $\alpha^\prime=2\arg(e^{\frac\alpha2\vec P}e^{\frac\varphi2\vec Q})$,
  \STATE $\vec P=\langle e^{\frac\alpha2\vec P}e^{\frac\varphi2\vec Q}\rangle_2|\langle e^{\frac\alpha2\vec P}e^{\frac\varphi2\vec Q}\rangle_2|^{-1}$,
  \STATE $\alpha=\alpha^\prime$,
 \ENDWHILE
\ENSURE angle: $\alpha$, plane: $\vec P$, corrected pattern: $\u(\x)$, iterations needed: $iter$.
\end{algorithmic}
\end{algorithm}
%\end{small}
%
We practically tested Algorithm \ref{alg3} applying it to continuous, linear vector fields $\R^3\to\clifford{3,0}$, that vanish outside the unit square. The vector fields were determined by nine random coefficients with magnitude not bigger than one. The plane and the angle $\alpha\in[0,\pi]$ of the outer rotation were also chosen randomly. The results for one million applications can be found in Table \ref{tab:1}. The error was measured from the square root of the sum of the squared differences of the determined and the given coefficients. The experiments showed that 
%unsuccessful runs (i.e. the sum of the squared differences of determined and given coefficients exceeds 0.04) 
high numbers of necessary iterations are much more likely to happen for angles with high magnitude and that the average error decreases linearily with the demanded accuracy. But most importantly we observed that Algorithm \ref{alg3} converged in all linear cases, just as the theory suggested. The hypercomplex correlation method of Moxey et. al. in \cite{MSE03} can be interpreted as one step of Algorithm \ref{alg3}, i.e., to terminate without iteration. The last row of Table \ref{tab:1} shows how the application of more iterations increases the accuracy.
%
% \begin{table}[hbt!]
% \begin{center}
% \begin{tabular}{|l|r|r|r|r|r|}
% \hline
% determined accuracy $eps$	&0.1	& 0.01	& 0.001	& 0.0001	& 0.00001	 \rule [-1.2mm]{0mm}{5mm}\\\hline
% %relative number of fails in \%	&28.04	& 0.13	& 0.002	& 0	&0 	\rule [-1.2mm]{0mm}{5mm}\\
% average error			&0.17	& 0.02	& 0.002	& 0.0002	& 0.00002	\rule [-1.2mm]{0mm}{5mm}\\\hline
% maximal error			&2.49	& 1.29	& 0.47	& 0.12		& 0.001		\rule [-1.2mm]{0mm}{5mm}\\\hline
% average number of iterations	&4.23	& 11.76	& 21.44	& 31.78		& 42.26		\rule [-1.2mm]{0mm}{5mm}\\\hline
% \end{tabular}%\label{tab:g2}
% \caption{
% Results of Algorithm \ref{alg3} depending on the required accuracy.}
% \end{center}
% \end{table}

\begin{table}
\caption{Results of Algorithm \ref{alg3} depending on the required accuracy}
\label{tab:1}       % Give a unique label
%
% Follow this input for your own table layout
%
\begin{tabular}{p{4.05cm}p{1.4cm}p{1.4cm}p{1.4cm}p{1.4cm}p{1.4cm}}
\hline\noalign{\smallskip}
determined accuracy $eps$	&0.1	& 0.01	& 0.001	& 0.0001	& 0.00001\\
\noalign{\smallskip}\svhline\noalign{\smallskip}
average error			&0.17	& 0.02	& 0.002	& 0.0002	& 0.00002\\
%\noalign{\smallskip}\svhline\noalign{\smallskip}
maximal error			&2.49	& 1.29	& 0.47	& 0.12		& 0.001\\
%\noalign{\smallskip}\svhline\noalign{\smallskip}
average number of iterations	&4.23	& 11.76	& 21.44	& 31.78		& 42.26\\
\noalign{\smallskip}\hline\noalign{\smallskip}
\end{tabular}
\end{table}
%----------------------------------------------------------------------------------------------------------------------------
\section{Conclusions and Outlook}
%----------------------------------------------------------------------------------------------------------------------------
The geometric cross correlation of two vector fields is scalar and bivector valued. Moxey et. al. \cite{MSE03} realized that this rotor yields an approximation of the outer rotational misalignment of vector fields. We analyzed that the quality of this approximation depends on the parallel and the orthogonal parts of the fields and proved in Lemma \ref{l:outer_prod} that the application of this rotor to the outer rotated copy of any vector field never increases the misalignment to the original field. In Theorem \ref{t:conv2} we refined this fact and showed that iterative application completely erases the misalignment of the rotationally misaligned vector fields. We presented Algorithm \ref{alg3}, which additionally contains exception handling, and experimentally confirmed our theoretical findings for linear vector fields. 
\par
We currently analyze the application of this approach to total rotations in \cite{BSH12b}.
%Bujack, Scheuermann and Hitzer: ``Detection of Total Rotations on linear 2D-Vector Fields with Iterative Geometric Correlation'', accepted at ICNPAA 2012. 
Other ideas to pursue are extensions of our results to higher dimensions or arbitrary multivector fields. Practically it will be of major interest how the algorithm is able to deal with vector fields, that are discrete, disturbed, differ by global translations, or are non-linear and if it is able to minimize the squared differences if the fields are not equal after rotation but only similar. In our future work we want to use the algorithm for image registration of real world data, accelerate it by means of a fast Fourier transform and a geometric convolution theorem \cite{BSH12c}, and compare it to established methods like, invariant descriptors, phase correlation, the Kabsch algorithm, methods using spherical harmonics, logpolar coordinates, or wavelets with standard and geometric algebra interpretations, with respect to reliability and runtime. 
 %\addcontentsline{toc}{section}{References}
  \bibliographystyle{plain} 
  \bibliography{./../../Literaturverzeichnis}

\end{document}